\newtheorem{defi}{Definition}[section]
\newtheorem{theorem}[defi]{Theorem}
\newtheorem{lemma}[defi]{Lemma}
\newtheorem{prop}[defi]{Proposition}
\def\R{\mathbb{R}}
\def\C{\mathbb{C}}
\def\e{\varepsilon}
\def\D{\Delta}
\def\g{\gamma}
\def\wra{\rightharpoonup}
\def\intr{\int_{\R^N}}
\def\dis{\displaystyle}
\begin{document}
\title[A modified nonlinear Schr{\"o}dinger equation]
{Existence of ground states for a modified nonlinear Schr{\"o}dinger equation}

\author{David Ruiz}
\author{Gaetano Siciliano}

\address{Departamento de An{\'a}lisis Matem{\'a}tico, University of Granada. 18071 Granada (Spain).}

\thanks{D.R. has been supported by the Spanish
Ministry of Science and Innovation under Grant MTM2008-00988 and
by J. Andaluc\'{\i}a (FQM 116).}
\thanks{G.S. has been supported by the University of Bari under grant DR 13800, by M.I.U.R. - P.R.I.N.
``Metodi variazionali e topologici nello studio di fenomeni non lineari'' and
by J. Andaluc\'{\i}a (FQM 116).}

\email{daruiz@ugr.es, sicilia@ugr.es}

\keywords{Quasilinear Schr{\"o}dinger equation, Pohozaev identity,
concentration-compactness.}

\subjclass[2000]{35J10; 
 35J20; 
 35J60.
} \maketitle

\begin{abstract}
In this paper we prove existence of ground state solutions of the
modified nonlinear Schr{\"o}dinger equation:
$$ -\D u+V(x)u-\frac{1}{2}u \D u^{2}=|u|^{p-1}u, \ x \in \R^N,\ N \geq 3,
$$ under some hypotheses on $V(x)$. This model has been proposed in
the theory of superfluid films in plasma physics. As a main
novelty with respect to some previous results, we are able to deal
with exponents $p\in(1,3)$. The proof is accomplished by
minimization under a convenient constraint.

\end{abstract}

\section{Introduction}

Let us consider the following modified version of the nonlinear
Schr{\"o}dinger equation:
\begin{equation*}
i\phi_t -\D \phi+W(x)\phi-\frac{1}{2}\D
g(|\phi|^{2})g'(|\phi|^2)\phi=f(x,\phi), \ \ x \in \R^N,
\end{equation*}
where $N \geq 3$, $\phi: \R \times \R^N \to \C$ and $f(x, \phi)$
is a nonlinear term. This quasilinear version of the nonlinear
Schr{\"o}dinger equation arises in several models of different
physical phenomena, such as in the study of superfluid films in
plasma physics, in condensed matter theory, etc. (see \cite{7, 8,
15, 16, 17, 25, 27}).

In this paper we restrict ourselves to the case $g(\phi)=\phi$ and
a power nonlinearity $f(x,\phi)$:
\begin{equation}\label{Eq1}
i\phi_t -\D \phi +W(x)\phi -\frac{1}{2}\phi \D
|\phi|^{2}=|\phi|^{p-1}\phi, \ \ x \in \R^N.
\end{equation}
This equation has been introduced in \cite{5,6,14} to study a
model of a self-trapped electrons in quadratic or hexagonal
lattices (see also \cite{brihaye, brihaye2}). In those references
numerical and analytical results have been given.

From a mathematical point of view, the local existence for the
Cauchy problem has been first considered in \cite{lange, popp},
being later improved in \cite{jeanjean}. See also \cite{kenig} for
a result concerning very general quasilinear Schr{\"o}dinger
equations. In \cite{jeanjean} the orbital stability of stationary
solutions is studied, including blow-up, an issue also considered
in \cite{guo}.

Here we are interested in the existence of standing waves. By
taking $\phi=e^{-i \omega t} u(x)$ with $u: \R^N \to \R$, we are
led to the equation:

\begin{equation}\label{Eq}
-\D u+V(x)u-\frac{1}{2}u \D u^{2}=|u|^{p-1}u,
\end{equation}
where $V(x)=W(x)+\omega$.

Let us define $X=\{u \in H^1(\R^N):\ \intr u^2 |\nabla u|^2
<+\infty\} = \{u\in H^1(\R^N): \ u^2 \in H^1(\R^N)\}$. Then, $u
\in X$ is a weak solution of \eqref{Eq} if: \begin{equation}
\label{weakf} \intr (1+u^2)\nabla u \cdot \nabla \psi + u|\nabla
u|^2 \psi + V(x) u \psi -|u|^{p-1}u\psi=0 \ \ \forall \ \psi \in
C_0^{\infty}(\R^N).
\end{equation}
Observe that this is a weak formulation of a quasilinear problem
with principal part in divergence form. In a certain sense, weak
solutions are critical points of the functional $I: X \to \R$
defined by:
\begin{equation}\label{I}
I(u)=\frac{1}{2} \Big (\int_{\R^{N}} |\nabla u|^2+V(x)u^2+u^2
|\nabla u|^2 \Big )-\frac{1}{p+1}\int_{\R^{N}} |u|^{p+1}.
\end{equation}

By using Sobolev embedding and interpolation we conclude that $X
\subset L^q(\R^N)$ for any $q \in [2, \frac{4N}{N-2}]$. Indeed,
the exponent $p= \frac{4N}{N-2}-1 = \frac{3N+2}{N-2}$ is critical
with respect to the existence of solutions, see \cite{LWW2}, for
instance.

It seems quite clear that $X$ is the right space for studying the
functional \eqref{I}. However, the term $\intr u^2 |\nabla u|^2$
is not convex and $X$ is not even a vector space. So, the usual
min-max techniques cannot be directly applied to $I$.

In literature several papers have considered this problem. Maybe
the first one is \cite{PSW}, where both the one dimensional and
radial cases are studied. In \cite{LW} the higher dimension case
is considered. In both papers the proofs are based on a
minimization technique on the set:
$$ \left \{u\in X: \ \intr |u|^{p+1} = \mu \right \}. $$
Therefore, a Lagrange multiplier appears in the equation.
Precisely, existence of solutions is provided for a sequence of
different multipliers.

In \cite{colin, LWW}, through a convenient change of variables,
\eqref{Eq} is transformed into a related semilinear problem. That
semilinear problem is more tractable since one can apply the
well-known arguments of \cite{blions}. So doing, solutions are
found if either $p\geq 3$ or $V$ is constant.

Finally, in \cite{LWW2} the authors use a minimization on a
Nehari-type constraint to get existence results. Their argument
does not depend on any change of variables, so it can be applied
to more general problems. Moreover, they also provide existence of
sign-changing solutions, which seems to be a delicate issue in
this kind of problems. But again $p\geq 3$ is assumed. See also
\cite{alves, LSW, LW2, moameni, moameni2, moameni3, o, severo,
severo2} for related results.

So, there are only a few partial results in the case $p\in (1,3)$.
As previously mentioned, in \cite{LW}, \cite{PSW} a Lagrange
multiplier appears in the equation. In \cite{colin} an existence
result is given but only for constant potentials $V$ (so one can
consider the radially symmetric case). Moreover, a specific change
of variables is used.

In this paper we give an existence result for nonconstant $V$ and
$p \in (1,\frac{3N+2}{N-2})$. Moreover we avoid the use of any
change of variables, so our techniques could be of use in more
general situations. The main result of the paper is the following:

\begin{theorem} \label{teo} Assume $p\in(1, \frac{3N+2}{N-2})$ and $V \in
C^1(\R^N)$ satisfying the following:

\begin{enumerate}
    \item[(V1)] $0<V_0 \le V(x)\le \lim_{|x|\rightarrow+\infty}V(x)=V_{\infty}<+\infty,$
    \item[(V2)] the function $x \mapsto x \cdot \nabla V(x)$
    belongs to $L^{\infty}(\R^N)$,
    \item[(V3)] the map $s \mapsto s^{\frac{N+2}{N+p+1}}V(s^{\frac{1}{N+p+1}}x)$ is concave for any $x\in\mathbb R^N.$
\end{enumerate}

Then, there exists $u\in X$ a positive solution of \eqref{Eq}.
Moreover $u$ is a ground state, that is, its energy $I(u)$ is
minimal among the set of nontrivial solutions of \eqref{Eq}.

\end{theorem}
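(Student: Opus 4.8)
The plan is to minimize $I$ on a Pohozaev-type manifold produced by a single dilation, chosen so that hypothesis (V3) forces the associated fibering map to be concave; this replaces the Nehari argument, which is exactly what breaks down for $p<3$. I introduce the scaling $u^t(x)=t\,u(x/t)$, $t>0$, for which
$$I(u^t)=\frac{t^{N}}{2}\intr|\nabla u|^{2}+\frac{t^{N+2}}{2}\intr V(tx)u^{2}+\frac{t^{N+2}}{2}\intr u^{2}|\nabla u|^{2}-\frac{t^{N+p+1}}{p+1}\intr|u|^{p+1}.$$
Setting $s=t^{N+p+1}$, every term becomes a power $s^{\theta}$ with $\theta\le1$: the gradient term carries $s^{N/(N+p+1)}$, the quasilinear term $s^{(N+2)/(N+p+1)}$ -- and $(N+2)/(N+p+1)<1$ is \emph{exactly} the hypothesis $p>1$ -- the nonlinearity the linear power $s$ with negative coefficient, while the potential contributes $\intr s^{(N+2)/(N+p+1)}V(s^{1/(N+p+1)}x)u^{2}$, concave in $s$ precisely by (V3). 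Hence $g(s):=I(u^t)$ is strictly concave with $g(0^+)=0$, $g'(0^+)=+\infty$ and (using $V\le V_\infty$) $g(s)\to-\infty$, so $t\mapsto I(u^t)$ has a unique maximum. This lets me define
$$\mathcal{M}=\Big\{u\in X\setminus\{0\}:\ J(u)=0\Big\},\qquad J(u):=\frac{d}{dt}\Big|_{t=1}I(u^t),$$
$$J(u)=\frac{N}{2}\intr|\nabla u|^{2}+\frac{N+2}{2}\intr V u^{2}+\frac12\intr(x\cdot\nabla V)u^{2}+\frac{N+2}{2}\intr u^{2}|\nabla u|^{2}-\frac{N+p+1}{p+1}\intr|u|^{p+1},$$
where (V2) makes the term $\frac12\intr(x\cdot\nabla V)u^{2}$ finite.

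Each $u\ne0$ meets $\mathcal M$ at a unique scale, and $c:=\inf_{\mathcal M}I>0$ because $g$ rises from $0$ to its maximum. Strict concavity gives $\frac{d^{2}}{dt^{2}}I(u^t)|_{t=1}<0$ on $\mathcal M$, the transversality needed to show that any critical point of $I|_{\mathcal M}$ is a free critical point of $I$: the Lagrange multiplier is forced to vanish. Coercivity of $I$ on $\mathcal M$ follows by writing $I=I-aJ$ for a small $a>0$; using (V1)--(V2) the coefficients of $\intr|\nabla u|^{2}$, $\intr u^{2}|\nabla u|^{2}$, $\intr u^{2}$ and $\intr|u|^{p+1}$ all become positive (the $\intr u^{2}$ coefficient $\tfrac12(1-a(N+2))V_0-\tfrac a2\|x\cdot\nabla V\|_\infty$ being positive for $a$ small), so a minimizing sequence $(u_n)\subset\mathcal M$ is bounded in $X$.

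The core of the argument is compactness of $(u_n)$, which I would obtain through the concentration-compactness principle applied to the densities of $u_n$. Vanishing is ruled out since it forces $\intr|u_n|^{p+1}\to0$ and hence $c=0$. Dichotomy is ruled out by the strict inequality $c<c_\infty$, where $c_\infty$ is the corresponding infimum for the constant potential $V_\infty$; the bound $V\le V_\infty$ from (V1) gives $c\le c_\infty$, strict when $V$ is nonconstant, and this prevents mass from escaping to spatial infinity while keeping the translations bounded. The step I expect to be the \emph{main obstacle} is passing to the limit in the nonconvex quasilinear term $\intr u_n^{2}|\nabla u_n|^{2}=\tfrac14\|\nabla(u_n^{2})\|_2^{2}$: weak convergence alone yields only lower semicontinuity, so to conclude that the weak limit $u$ both lies on $\mathcal M$ (no energy is lost) and actually solves \eqref{Eq}, one must upgrade to strong convergence of $u_n^{2}$ in $H^1$, combining the exclusion of dichotomy with the equation satisfied by the limit.

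Once strong convergence is established, $u$ attains $c$ on $\mathcal M$ and, by the vanishing multiplier, is a weak solution of \eqref{Eq}; since replacing $u$ by $|u|$ keeps it on $\mathcal M$ with the same energy, the maximum principle upgrades it to a positive solution. Finally $u$ is a ground state: because $J(u)=\langle I'(u),\,u-x\cdot\nabla u\rangle$, every nontrivial solution of \eqref{Eq} satisfies $J=0$ (the underlying Pohozaev identity again requiring (V2)) and so lies on $\mathcal M$; hence $I(u)=c=\inf_{\mathcal M}I$ is minimal among all nontrivial solutions.
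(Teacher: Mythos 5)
Your overall strategy is the right one and matches the paper's: minimize $I$ on the Pohozaev set $\{J=0\}$ generated by the dilation $t\mapsto tu(\cdot/t)$, with (V3) supplying concavity of the fibering map in the variable $s=t^{N+p+1}$ and hence uniqueness of the maximum. However, two of your key steps do not work as written. First, the coercivity via $I-aJ$: for the coefficient of $\intr|u|^{p+1}$, which equals $\frac{a(N+p+1)-1}{p+1}$, to be nonnegative you are forced to take $a\ge\frac{1}{N+p+1}$, and then the coefficient of $\intr u^2$ becomes $\frac{(p-1)V_0-\|x\cdot\nabla V\|_\infty}{2(N+p+1)}$, which is \emph{not} positive under (V1)--(V2) alone; with $a$ small, as you propose, the $|u|^{p+1}$ term survives with a negative sign and the lower bound fails. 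The paper instead estimates $I(u_t)-t^{N+p+1}I(u)$ for a fixed small $t\in(0,1)$ (so the $|u|^{p+1}$ terms cancel exactly) and uses $I(u_t)\le I(u)$, which is available precisely because of the concavity lemma. A related smaller gap: ``$g$ rises from $0$ to its maximum'' shows $I>0$ pointwise on $\mathcal M$ but not $\inf_{\mathcal M}I>0$; the paper needs a separate comparison with the autonomous functional for $V\equiv V_0$ plus a Sobolev inequality to bound $\intr u^2|\nabla u|^2$ away from zero on the constraint.

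Second, the ``vanishing Lagrange multiplier'' step is not available here: $X$ is not a vector space, $I$ and $J$ are only Gateaux differentiable along directions in $C_0^\infty$, and the transversality identity for a Pohozaev constraint pairs $J'(u)$ with the scaling direction $u-x\cdot\nabla u$, which is not an admissible variation. The paper circumvents this entirely with a deformation argument (Liu--Wang--Wang): assuming $\langle I'(u),\phi\rangle<-1$ for some $\phi\in C_0^\infty$, it perturbs the curve $u_t$ to $u_t+\e\eta(t)\phi$, shows the perturbed curve stays strictly below $m$, and uses the intermediate value theorem on $t\mapsto J(\gamma(t))$ to find a point of $M$ with energy below $m$ --- a contradiction. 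Finally, in the compactness step your proposed resolution of the main obstacle is circular: you cannot use ``the equation satisfied by the limit'' to upgrade $u_n^2\rightharpoonup u^2$ to strong convergence, because the limit is not known to solve the equation until after compactness is established. The paper's route is to prove, via the cut-off splitting $u_n=v_n+w_n$ and a comparison of the maximum times $t^{v_n}$, $t^{w_n}$, that $\|w_n\|_{H^1}+\|w_n^2\|_{H^1}=O(\e^\tau)$, deduce $u_n\to u$ strongly in $L^q$, and then conclude from $m\ge\liminf I((u_n)_t)\ge I(u_t)$ for every $t$ that $\max_t I(u_t)=m$, which forces the weak lower semicontinuity inequalities to be equalities and yields strong $H^1$ convergence of $u_n$ and $u_n^2$ without ever invoking $c<c_\infty$ (the case of translations escaping to infinity is handled separately using $V\le V_\infty$).
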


Let us comment briefly the conditions assumed on $V(x)$.
Hypothesis (V1) is quite usual in this kind of problems (other
typical assumptions are periodicity, or $\lim_{|x|\to +\infty}
V(x)=+\infty$). This condition will allow us, by using the
concentration-compactness principle of Lions (\cite{L1}), to deal
with the lack of compactness due to the effect of translations in
$\R^N$.

Condition (V2) is technical but not very restrictive since,
whenever $\lim_{|x|\to +\infty} x \cdot \nabla V(x)$ exists, it
must be zero (by (V1)). Here we can also handle bounded
oscillations at infinity of the function $x \mapsto x \cdot \nabla
V(x)$.

More restrictive is the concavity hypothesis (V3). It is used at a
unique technical but essential point, in the proof of Lemma
\ref{lemma11}. Obviously, constant functions satisfy (V3). Let us
give a nontrivial example where (V3) is verified. Take $W\in
C^2(\R^N)$ such that:

\begin{enumerate}
    \item[(W1)] $ W(x)\le \lim_{|x|\rightarrow+\infty}W(x)=W_{\infty}<+\infty$,
    \item[(W2)] both functions $x \mapsto x \cdot \nabla W(x)$, $x \mapsto D^2W(x)[x,x]$
    belong to $L^{\infty}(\R^N)$.
\end{enumerate}

As above, condition (W2) is not very restrictive. If (W1) holds
and the limits $\lim_{|x|\to +\infty} x \cdot \nabla W(x) $, $
\lim_{|x|\to +\infty} D^2W(x)[x,x]$ exist, they must be zero and
then (W2) is satisfied.

Then, one can easily check that there exists $\omega_0 \in \R$
such that for any $\omega>\omega_0$, $V(x)=W(x)+\omega$ satisfies
(V1), (V2) and (V3). Moreover $\omega_0$ can be made explicit upon
the $L^{\infty}$ norms of $W$ and the functions given in
assumption (W2). Therefore we provide existence of standing waves
for \eqref{Eq1} under conditions (W1) and (W2) (for phases
$\omega>\omega_0$).

The proof is based, again, on a constrained minimization
procedure. But here the constraint is not of Nehari-type; instead,
we use a Pohozaev identity. In general, this technique of
minimizing a functional under a Pohozaev constraint has been
rarely used in literature. As far as we know, it appears for the
first time in \cite{shatah} in the study of stationary solutions
of the nonlinear Klein-Gordon problem (see also \cite{JFA} for a
different application).

Once a minimizer has been found, we need to show that it is indeed
a solution. In order to prove that we follow the ideas of Lemma
2.5 of \cite{LWW2} (in turn inspired by \cite{castro}).

The paper is organized as follows. In Section 2 we establish some
preliminary results and state Theorems \ref{teo1} and \ref{teo2},
from which Theorem \ref{teo} follows. Section 3 is devoted to the
proofs of Theorems \ref{teo1} and \ref{teo2}.

\section{Preliminaries and statement of the results}

In this section we begin the study of \eqref{Eq}. After some
preliminaries, we show the minimization process that will be used
in the proofs. Finally, we state Theorems \ref{teo1} and
\ref{teo2}.

First of all, some comments are in order. In the Appendix of
\cite{LWW2} it is proved that weak solutions are bounded in
$L^{\infty}(\R^N)$. Let us point out that their arguments (based
on Moser and De Giorgi iterations) work also for $p \in (1,3)$.

As mentioned in the introduction, problem \eqref{Eq} is a
quasilinear elliptic equation with principal part in divergence
form. A density argument shows that the weak formulation
\eqref{weakf} holds also for test functions in $H^1(\R^N) \cap
L^{\infty}(\R^N)$. By \cite{lu} (see Theorems 5.2 and 6.2 in
Chapter 4) it follows that $u \in C^{1,\alpha}$. From Schauder
theory we conclude that $u \in C^{2,\alpha}$ is a classical
solution of \eqref{Eq}.

Moreover, if $u\in X$ is a solution, $u$, $Du$, $D^2u$ have an
exponential decay as $|x| \to + \infty$ (see again the Appendix in
\cite{LWW2} and take into account the previous regularity
discussion).

Finally, in \cite{LWW2} a Pohozaev identity for equation
\eqref{Eq} is mentioned. Let us make it explicit. Just by applying
Proposition 2 of \cite{pucci}, assume that $u \in X$ is a $C^2$
solution of \eqref{Eq} such that:
\begin{equation} \label{condpoho}
|\nabla u|^2+V(x)u^2+u^2 |\nabla u|^2 + |u|^{p+1}\ ,\ \
\frac{|u|}{1+|x|} \Big( |\nabla u|+ u^2 |\nabla u|  \Big) \ \in
L^1(\R^N), \end{equation} then, for any $a\in \R$, $u$ satisfies
the identity:
\begin{multline} \label{poho} 
\dis \Big( \frac{2-N}{2}+a \Big)\int_{\R^{N}}
|\nabla u|^2 +\Big(a- \frac{N}{2}\Big)\int_{\R^{N}} V(x) u^2 -
\frac 1 2 \int_{\R^{N}} \nabla V(x)\cdot x\, u^2 + \\ 
\Big(2a+\frac{2-N}{2} \Big) \intr |\nabla u|^2 u^2 +
\Big(\frac{N}{p+1}-a \Big) \int_{\R^{N}} |u|^{p+1}=0.
\end{multline}

Let us briefly check conditions \eqref{condpoho}. The first
condition being obvious for any $u\in X$, we consider the second
one. By using H{\"o}lder and Hardy inequalities:
$$ \intr \frac{|u|}{1+|x|}  |\nabla u| \leq \Big( \intr \frac{|u|^2}{|x|^2} \Big)^{1/2}
\Big( \intr |\nabla u|^2 \Big)^{1/2} \leq C \intr |\nabla
u|^2<+\infty,
$$
and,
$$ \intr \frac{u^2}{1+|x|} \ |u| \,|\nabla u| \leq \Big( \intr \frac{u^4}{|x|^2} \Big)^{1/2}
\Big( \intr u^2|\nabla u|^2 \Big)^{1/2} \leq C \intr |\nabla
u^2|^2< + \infty.
$$

Let us now say a few words on the space $X$:
\begin{equation*}
X=\{u\in H^1(\mathbb{R}^N): u^2\in H^1(\mathbb{R}^N) \}
\end{equation*}
where $H^{1}(\R^{N})$ is the usual Sobolev space. As mentioned in
the introduction, $X$ is not a vector space (it  is not closed
under the sum), nevertheless it is a complete metric space with
distance:
$$d_{X}(u,v)=\|u-v\|_{H^1}+\| \nabla u^2-\nabla v^2\|_{L^2}.$$
It is easy to check that $I$ is continuous on $X$. Moreover, for
any $\psi \in C_0^{\infty}(\R^N)$ and $u \in X$, $u+\psi \in X$,
and we can compute the Gateaux derivative:
\begin{equation*}
\langle I'(u),\psi\rangle=\intr (1+u^2)\nabla u \cdot \nabla \psi
+ u|\nabla u|^2 \psi + V(x) u \psi -|u|^{p-1}u\psi.
\end{equation*}
Therefore, $u \in X$ is a solution of \eqref{Eq} if and only if
the Gateaux derivative of $I$ along any direction in
$C_0^{\infty}(\R^N)$ vanishes.

For any $u\in X$ we hereafter denote by $u_{t}$ the map:
$$\mathbb R^+ \ni t \mapsto u_t \in X,\ u_t(x)=tu(t^{-1}x).$$ It is an exercise to check
that $t \mapsto u_t$ is indeed a continuous curve in $X$ (for
example, use Brezis-Lieb lemma \cite{brezis-lieb}). Let us compute
the functional $I$ on that curve:
\begin{multline}\label{f}
    f_u (t):=I(u_t)=\frac{t^{N}}{2}\int_{\R ^{N}} |\nabla u|^2+\frac{t^{N+2}}{2}\int_{\R^{N}} V(tx) u^2\\
    +\frac{t^{N+2}}{2}\int_{\R^{N}} |\nabla u|^2 u^2-\frac{t^{N+p+1}}{p+1}\int_{\R^{N}}
    |u|^{p+1}.
\end{multline}
Since $p+1>2$, we get that
\begin{itemize}
\item $f_u (t)>0$ for $t>0$ sufficiently small,
 \item $\lim_{t\rightarrow+\infty} f_u (t)=-\infty$.
\end{itemize}
This implies that $f_u$ attains its maximum. Moreover, thanks to
(V2), $f_u: \R^+ \to \R$ is $C^1$ and:
\begin{multline*}
f_u'(t)=\frac{N}{2}t^{N-1}\int_{\R^{N}} |\nabla
u|^2+\frac{N+2}{2}t^{N+1}\int_{\R^{N}} V(tx)\, u^2+\frac{t^{N+1}}
{2}\int_{\R^{N}} \nabla V(tx)\cdot tx\, u^2 \\
     +\frac{N+2}{2}t^{N+1}\int_{\R^{N}} |\nabla u|^2 u^2-\frac{N+p+1}{p+1} t^{N+p}\int_{\R^{N}} |u|^{p+1}. \nonumber
\end{multline*}
This motivates the following definition:
\begin{eqnarray*}
M=\left\{u\in X\setminus\{0\}: J(u)=0 \right\},
\end{eqnarray*}
where $J:X \to \R$ is defined as:
\begin{multline*}
 J(u)=\frac{N }{2}\int_{\R^{N}}
|\nabla u|^2+ \frac{N+2}{2}\int_{\R^{N}} \Big( V(x)\, u^2+|\nabla u|^2 u^2\Big) +\\
\frac{1}{2}\int_{\R^{N}} \Big (\nabla V(x)\cdot x\, u^2 \Big )
    -\frac{N+p+1}{p+1} \int_{\R^{N}} |u|^{p+1}.$$
\end{multline*}

In other words, $M$ is the set of functions $u$ such that
$f_u'(1)=0$. Moreover, for $t\neq 0, f'_{u_{t}}(1)=t f'_{u}(t)$.
Observe also that $M$ is nothing but the set of functions $u \in
X$ such that the Pohozaev identity \eqref{poho} holds for $a=-1$.
Then, all solutions belong to $M$.

\medskip

We now state Theorems \ref{teo1} and \ref{teo2}, that will be
proved in the next section.

\begin{theorem}\label{teo1} Define $m:=\inf_M I$. Then $m$ is positive and is
achieved at some $u\in M$.
\end{theorem}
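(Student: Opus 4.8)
The plan is to take a minimizing sequence $(u_n)\subset M$, with $I(u_n)\to m$, and to prove that it is bounded in $X$ and, up to translations, converges strongly in $X$ to the desired minimizer. The basic algebraic device is that on $M$ one may freely add multiples of the constraint: since $J(u)=0$, for every $\lambda$ one has $I(u)=I(u)-\lambda J(u)$, and a good choice of $\lambda$ produces an expression with favourable signs. Taking $\lambda=\frac{1}{N+p+1}$ cancels the term $\int_{\R^N}|u|^{p+1}$ and leaves a combination of $\int_{\R^N}|\nabla u|^2$, $\int_{\R^N}V(x)u^2$ and $\int_{\R^N}u^2|\nabla u|^2$ (all nonnegative by (V1)) against $\int_{\R^N}\nabla V(x)\cdot x\,u^2$, which is controlled by (V2). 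I will use this identity for the lower bound $m>0$, and a similar identity together with the constraint itself for the boundedness of $(u_n)$.

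To bound $(u_n)$ and to show $m>0$ I would combine the above identity with Gagliardo--Nirenberg estimates adapted to $X$: writing $\int_{\R^N}|u|^{p+1}=\int_{\R^N}(u^2)^{(p+1)/2}$ and using that $u^2\in H^1(\R^N)$, so that $X\hookrightarrow L^q(\R^N)$ for $q\in[2,\frac{4N}{N-2}]$, one gets $\int_{\R^N}|u|^{p+1}\le C\,\Phi(u)^{\gamma}$ with $\gamma>1$, where $\Phi(u):=\int_{\R^N}(|\nabla u|^2+u^2+u^2|\nabla u|^2)$. On the other hand, $J(u)=0$ forces $\int_{\R^N}|u|^{p+1}$ to be bounded below by a quantity comparable to $\Phi(u)$ (again using (V1)--(V2)); comparing the two yields $\Phi(u)\ge\rho>0$ for all $u\in M$, i.e. $M$ stays away from $0$. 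Boundedness of $(u_n)$ then follows from the coercivity of the constraint under (V1)--(V2), and the positivity $m>0$ from the fact, guaranteed by the concavity (V3), that $I(u)=\max_{t>0}f_u(t)$ for $u\in M$, which together with $\Phi\ge\rho$ and the boundedness gives a uniform positive lower bound; once a minimizer is found, $m=I(u)=\max_{t>0}f_u(t)>0$ in any case. The sign of the term $\int_{\R^N}\nabla V(x)\cdot x\,u^2$ is the one delicate point in these estimates, and is exactly where (V2) enters.

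The core of the proof, and the main obstacle, is the compactness of $(u_n)$: the equation lives on all of $\R^N$ and is asymptotically invariant under translations, since $V(x)\to V_\infty$, so a bounded minimizing sequence may a priori vanish, split (dichotomy), or slide off to infinity. I would invoke the concentration--compactness principle of Lions applied to the normalized densities built from $\rho_n=|\nabla u_n|^2+u_n^2+u_n^2|\nabla u_n|^2+|u_n|^{p+1}$. Vanishing is ruled out by the previous step: $\int_{\R^N}|u_n|^{p+1}\ge\delta>0$, whereas vanishing together with the $H^1$-bound on $u_n^2$ would force $u_n\to0$ in $L^{p+1}(\R^N)$ for every admissible exponent. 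The serious alternatives, dichotomy and escape to infinity, I would exclude by comparison with the problem at infinity, where $V$ is replaced by the constant $V_\infty$; call $m_\infty$ its ground-state level. Since $V\le V_\infty$, a comparison argument (projecting a minimizer $u_\infty$ of the translation-invariant problem at infinity onto $M$ along $t\mapsto(u_\infty)_t$ and using $f^{V}_{u_\infty}(t)\le f^{V_\infty}_{u_\infty}(t)$) gives $m\le m_\infty$, with strict inequality $m<m_\infty$ whenever $V\not\equiv V_\infty$, since $u_\infty>0$. This strict inequality is the key: it forbids any loss of mass at infinity and any dichotomy; in the degenerate translation-invariant case $V\equiv V_\infty$ the escaping mass is instead recovered by translating $u_n$ back. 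Hence, after at most a bounded translation, $(u_n)$ concentrates.

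It remains to pass to the limit. Up to a subsequence, $u_n\rightharpoonup u$ in $H^1(\R^N)$ and $u_n^2\rightharpoonup u^2$ in $H^1(\R^N)$, $u_n\to u$ a.e., and $u\ne0$ by non-vanishing. The Brezis--Lieb lemma handles the two troublesome terms $\int_{\R^N}u_n^2|\nabla u_n|^2$ and $\int_{\R^N}|u_n|^{p+1}$ (recall that $X$ is not a vector space and the quasilinear term is not convex), while weak lower semicontinuity controls $\int_{\R^N}|\nabla u_n|^2$ and $\int_{\R^N}V(x)u_n^2$; combined with the exclusion of energy loss, this upgrades weak to strong convergence in $X$. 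Strong convergence then gives $J(u)=0$, i.e. $u\in M$, and $I(u)=m$, completing the proof. The points I expect to fight with are precisely the adaptation of Lions' vanishing lemma and of Brezis--Lieb to the non-convex term $u^2|\nabla u|^2$, the coercivity and sign bookkeeping around $\int_{\R^N}\nabla V(x)\cdot x\,u^2$, and securing the strict inequality $m<m_\infty$, which is what makes the concentration--compactness scheme close.
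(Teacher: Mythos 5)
Your overall architecture (minimizing sequence on $M$, Lions' lemma to rule out vanishing, a splitting argument, passage to the limit up to translations) matches the paper's, but two of your key steps do not go through as written.

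First, the coercivity. You propose to control $\Phi(u)$ on $M$ via $I(u)=I(u)-\frac{1}{N+p+1}J(u)$. This does cancel $\int_{\R^N}|u|^{p+1}$, but it leaves the term $-\frac{1}{2(N+p+1)}\int_{\R^N}\nabla V(x)\cdot x\,u^2$ against a coefficient $\frac{p-1}{2(N+p+1)}$ on $\int_{\R^N}V(x)u^2$. Hypothesis (V2) only says $x\cdot\nabla V\in L^\infty(\R^N)$, with no smallness relative to $(p-1)V_0$ and no sign; since $\int_{\R^N}u^2$ cannot be absorbed by the gradient terms on the whole space, boundedness of $\int_{\R^N}u_n^2$ (and hence of the minimizing sequence) is genuinely at stake, and the same issue contaminates your lower bound for $\int_{\R^N}|u|^{p+1}$ on $M$. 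The paper sidesteps this twice: for $m>0$ it compares with the functional $\bar I$ having \emph{constant} potential $V_0$ (so the $\nabla V\cdot x$ term disappears), and for coercivity Proposition \ref{prop1} computes $I(u_t)-t^{N+p+1}I(u)$ for small $t\in(0,1)$, where the $|u|^{p+1}$ terms cancel and every remaining coefficient is positive using only $V_0\le V\le V_\infty$; the inequality $I(u_t)\le I(u)$ needed there is exactly where (V3) enters through Lemma \ref{lemma11}. You cannot expect (V1)--(V2) alone to deliver this.

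Second, and more seriously, the dichotomy exclusion. The strict inequality $m<m_\infty$ does not by itself forbid splitting for a Pohozaev-type constraint: if $u_n\approx v_n+w_n$ with separating supports, neither piece lies in $M$, so each must be projected onto $M$ along its own dilation curve, and the maxima $\max_{t>0}I((v_n)_t)$ and $\max_{t>0}I((w_n)_t)$ are attained at different parameters $t^{v_n}\ne t^{w_n}$; $I(u_n)=\max_{t>0}I((u_n)_t)$ only dominates the sum of the two energies evaluated at a \emph{common} $t$, so no superadditivity is available and the usual Lions bookkeeping (designed for mass constraints) does not apply. This is precisely what Step 3 of the paper resolves: assuming $t^{v_n}\le t^{w_n}$, one has $I((w_n)_t)\ge 0$ for $t\le t^{v_n}$, and the near-additivity $I((u_n)_t)\ge I((v_n)_t)+I((w_n)_t)-C\e$ evaluated at $t=t^{v_n}$ gives $I((w_n)_t)\le C\e$ on $(0,t^{v_n}]$, which, after pinning $t^{v_n}\in(\tilde t,\bar t)$, forces $\|w_n\|_{H^1}+\|w_n^2\|_{H^1}\le C\e^{\tau}$. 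Note that the paper never needs $m<m_\infty$ at all: in its Case 2 the sequence is allowed to escape to infinity, and a minimizer is still recovered as $z_{t^z}$ using only $V\le V_\infty$. Your route would additionally require solving the autonomous problem at infinity and proving a subadditivity inequality that is not established (and not obviously true) in this setting.
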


\begin{theorem}\label{teo2}
The minimizer $u$ provided by Theorem \ref{teo1} is a ground state
solution of equation \eqref{Eq}. Moreover, it is positive (up to a
change of sign).
\end{theorem}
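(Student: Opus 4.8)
The plan is to show that the constrained minimizer $u$ from Theorem \ref{teo1} is in fact a \emph{free} critical point of $I$, exploiting the scaling structure behind the constraint $M$ rather than an abstract Lagrange multiplier. Recall from \eqref{f} that $f_v(t)=I(v_t)$, and that by the identity $f'_{v_t}(1)=t\,f_v'(t)$ a nontrivial $v$ satisfies $v_t\in M$ exactly when $t$ is a critical point of $f_v$. The whole argument rests on the fact that for every $v\in X\setminus\{0\}$ the curve $f_v$ has a \emph{unique} maximum point $t(v)>0$, which is where (V3) enters. To see this, I would substitute $s=t^{N+p+1}$ and set $h_v(s)=f_v(s^{1/(N+p+1)})$: the kinetic terms become multiples of $s^{N/(N+p+1)}$ and $s^{(N+2)/(N+p+1)}$, with exponents in $(0,1)$ precisely because $p>1$, hence concave; the potential term becomes $\tfrac12\intr s^{(N+2)/(N+p+1)}V(s^{1/(N+p+1)}x)\,v^2$, concave by (V3); and the nonlinear term is linear in $s$. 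Since $\intr|\nabla v|^2>0$ for $v\neq 0$, $h_v$ is strictly concave, so $f_v$ has a unique maximizer $t(v)$ with $v_{t(v)}\in M$ (this is the content of Lemma \ref{lemma11}). Consequently $\tilde I(v):=\max_{t>0}I(v_t)=I(v_{t(v)})\ge m$ for all $v$, with $\tilde I(u)=m$ and $t(u)=1$ for our minimizer $u\in M$.

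Now fix $\psi\in C_0^\infty(\R^N)$; as recorded in the preliminaries, $u+s\psi\in X$ for all $s$. Set $F(s,t)=I((u+s\psi)_t)$ and $\phi(s)=\tilde I(u+s\psi)=\max_{t>0}F(s,t)$. From \eqref{f} one checks that $F$ and $\partial_s F$ are continuous in $(s,t)$, that for $s$ near $0$ the unique maximizer $t(s)$ stays in a fixed compact subinterval of $(0,\infty)$, and that $t(0)=1$. By the envelope (Danskin) theorem — which needs only continuity of $\partial_s F$ and uniqueness of the maximizer, not differentiability of $s\mapsto t(s)$ — the function $\phi$ is differentiable at $0$ with $\phi'(0)=\partial_s F(0,1)=\langle I'(u),\psi\rangle$, the last equality being the Gateaux derivative computed in the preliminaries. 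Since $\phi(s)\ge m=\phi(0)$, the point $s=0$ is a minimum, so $\phi'(0)=0$ and hence $\langle I'(u),\psi\rangle=0$. As $\psi$ was arbitrary, $u$ is a weak solution of \eqref{Eq}, and therefore a classical $C^2$ solution by the regularity discussion in the preliminaries.

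It remains to prove the ground state property and positivity. Since every solution of \eqref{Eq} lies in $M$ and $u$ realizes $m=\inf_M I$, we get $I(u)=m\le I(v)$ for every nontrivial solution $v$, i.e. $u$ is a ground state. For positivity, note that $|u|\in X$ with $\intr|\nabla|u||^2=\intr|\nabla u|^2$ and $|u|^2=u^2$, so $I(|u|)=I(u)=m$ and $J(|u|)=J(u)=0$; thus $|u|\in M$ is also a minimizer and, by the argument above, a nonnegative classical solution. Writing the equation in the form $-\Delta|u|+\frac{V}{1+|u|^2}|u|=\frac{|u|\,|\nabla u|^2+|u|^p}{1+|u|^2}\ge 0$ and applying the strong maximum principle yields $|u|>0$. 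Replacing $u$ by $|u|$ therefore produces a positive ground state, which proves the statement up to a change of sign.

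I expect the main obstacle to be the rigorous justification of the envelope step on the non-vector space $X$: verifying the joint continuity of $\partial_s F$, together with the uniqueness and local boundedness of the maximizer $t(s)$, which is exactly what the concavity coming from (V3) supplies. The conceptual point is that, because $t=1$ is critical for $f_u$ (this is precisely $u\in M$), no Lagrange multiplier survives and $\langle I'(u),\psi\rangle$ appears directly as $\phi'(0)$; the delicate part is making this differentiation of the maximized functional legitimate.
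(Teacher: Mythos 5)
Your argument is correct, but the crucial step---showing that the constrained minimizer is a genuine critical point of $I$---is carried out by a different mechanism than in the paper. You differentiate the value function $\phi(s)=\max_{t>0}I((u+s\psi)_t)$ at $s=0$ by an envelope (Danskin-type) argument: the uniqueness of the fiber maximizer given by Lemma \ref{lemma11} (i.e.\ by (V3)), together with the uniform coercivity of $f_{u+s\psi}$ for small $s$, confines $t(s)$ to a compact subinterval of $(0,\infty)$ and forces $t(s)\to t(0)=1$, so that $\phi'(0)=\partial_sF(0,1)=\langle I'(u),\psi\rangle$, which must vanish since $s=0$ minimizes $\phi\geq m$. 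The paper instead argues by contradiction with a quantitative deformation in the spirit of Liu--Wang--Wang and Castro--Cossio--Neuberger: assuming $\langle I'(u),\phi\rangle<-1$ for some test function $\phi$, it perturbs the curve $t\mapsto u_t$ to $\gamma(t)=u_t+\varepsilon\eta(t)\phi$, shows via the mean value theorem that $\sup_{t\geq 0}I(\gamma(t))<m$, and then uses the sign change of $J$ along $\gamma$ (again a consequence of Lemma \ref{lemma11}) and the intermediate value theorem to produce $\gamma(t_0)\in M$ with $I(\gamma(t_0))<m$, a contradiction. Your route is more conceptual and makes explicit exactly what must be verified (joint continuity of $\partial_sF$ and continuity of $s\mapsto t(s)$, both of which do hold here by the computations of the preliminaries); the paper's route never differentiates the maximized functional nor mentions the map $v\mapsto t(v)$, needing only continuity of $J$ along the perturbed curve, which makes it slightly more robust on the non-vector space $X$. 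Both proofs rest on the same two pillars, namely the characterization $m=\inf_{v\neq 0}\max_{t>0}I(v_t)$ and the uniqueness of the maximizer, and your treatment of the ground-state property and of positivity (passing to $|u|\in M$ and applying the strong maximum principle to the rewritten equation) coincides with the paper's.
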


\section{Proof of the main results}
In order to prove Theorems \ref{teo1} and \ref{teo2}, we need
several auxiliary results. The proofs of the theorems will be
given in two final subsections.

\begin{lemma} \label{lemma11} For any $u \in X-\{ 0\}$, the map
$f_u$ defined in \eqref{f} attains its maximum at exactly one
point $t^{u}$. Moreover, $f_u$ is positive and increasing for
$t\in [0,t^u]$ and decreasing for $t>t^u$. Finally,
$$m=\inf_{u\in X, u\ne0} \max_{t>0} I(u_t)\,.$$
\end{lemma}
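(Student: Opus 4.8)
The plan is to remove the mixed powers of $t$ in \eqref{f} by a single change of variable and to recognise hypothesis (V3) as a concavity statement in the new variable. Concretely, I would set $\tau=t^{N+p+1}$, which is a strictly increasing bijection of $\R^+$ onto itself, and put $g_u(\tau):=f_u(\tau^{1/(N+p+1)})$. Each monomial $t^N$, $t^{N+2}$, $t^{N+p+1}$ then becomes a pure power of $\tau$, namely $\tau^{N/(N+p+1)}$, $\tau^{(N+2)/(N+p+1)}$ and $\tau$, while the potential term turns into $\tfrac12\intr \tau^{(N+2)/(N+p+1)}V(\tau^{1/(N+p+1)}x)\,u^2$. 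The integrand of this last term is exactly the function appearing in (V3), so it is concave in $\tau$ for each fixed $x$, and hence its $u^2$-weighted integral is concave in $\tau$.

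The key step is then to observe that $g_u$ is \emph{strictly} concave on $\R^+$. Indeed, the exponent $N/(N+p+1)$ lies in $(0,1)$ and the coefficient $\tfrac12\intr|\nabla u|^2$ is strictly positive (if it vanished, $u$ would be constant, hence $u=0$); thus this term alone is strictly concave. The term $\tfrac12\intr|\nabla u|^2u^2\,\tau^{(N+2)/(N+p+1)}$ is concave because $p>1$ forces $(N+2)/(N+p+1)\in(0,1)$; the potential term is concave by (V3); and the last term is linear. Hence $g_u$ is strictly concave. Since we already know $f_u(t)\to-\infty$ as $t\to+\infty$ and $f_u>0$ for small $t>0$, the same holds for $g_u$, and $g_u(0^+)=0$ by dominated convergence using $0<V\le V_\infty$. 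A strictly concave function on $\R^+$ with these properties attains its maximum at a \emph{unique} point $\tau^u$, is strictly increasing on $[0,\tau^u]$ and strictly decreasing afterwards, and is positive on $(0,\tau^u]$. Pulling back through the monotone map $\tau=t^{N+p+1}$ gives the unique maximiser $t^u=(\tau^u)^{1/(N+p+1)}$ together with the stated monotonicity and positivity of $f_u$.

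For the final identity I would use the scaling relation $f'_{u_t}(1)=t\,f'_u(t)$ recorded above together with the uniqueness just proved. Given any $u\in X\setminus\{0\}$, set $v:=u_{t^u}$; then $f'_v(1)=t^u f'_u(t^u)=0$ because $t^u$ is the critical point of $f_u$, so $v\in M$, while $I(v)=f_u(t^u)=\max_{t>0}I(u_t)$. This shows $\max_{t>0}I(u_t)\ge m$ for every $u$, hence $\inf_{u\ne0}\max_{t>0}I(u_t)\ge m$. Conversely, if $u\in M$ then $f'_u(1)=0$, so by uniqueness $t^u=1$ and $\max_{t>0}I(u_t)=f_u(1)=I(u)$; taking the infimum over $M$ yields the reverse inequality, and combining the two gives $m=\inf_{u\ne0}\max_{t>0}I(u_t)$.

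The main obstacle is the concavity step, which is the only place where (V3) enters and on which the whole argument hinges; everything else (the limits of $f_u$, its $C^1$ regularity, the scaling relation) has already been established, so the remaining work is just the bookkeeping of transferring strict concavity and its consequences back and forth through the monotone change of variable.
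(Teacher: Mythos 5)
Your proposal is correct and follows essentially the same route as the paper: the substitution $s=t^{N+p+1}$ turns $f_u$ into a concave function of the new variable, with (V3) handling the potential term and the exponents $\frac{N}{N+p+1},\frac{N+2}{N+p+1}\in(0,1)$ handling the rest, after which uniqueness of the maximiser and the identity $m=\inf_{u\ne0}\max_{t>0}I(u_t)$ follow from the scaling relation $f'_{u_t}(1)=tf'_u(t)$. In fact you supply a detail the paper leaves implicit, namely that the gradient term makes the function \emph{strictly} concave (since $\intr|\nabla u|^2>0$ for $u\ne0$), which is what actually guarantees the critical point is unique.
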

\begin{proof}

First of all, by making the change of variable $s=t^{N+p+1}$, we
get
$$f_u (s)=\frac{1}{2}s^{\frac{N}{N+p+1}}\int_{\R^{N}} |\nabla u|^2
    +\frac{s^{\frac{N+2}{N+p+1}}}{2}\int_{\R^{N}} (V(s^{\frac{1}{N+p+1}}x)u^2
    + |\nabla u|^2 u^2)-\frac{s}{p+1}\int_{\R^{N}} |u|^{p+1}.$$
By assumption (V3) (and this is the only but essential point in
which (V3) is used!) this is a concave function. We already know
that it attains its maximum; let $t^u$ be the unique point at
which this maximum is achieved. Then $t^u$ is the unique critical
point of $f_u$ and $f_u$ is positive and increasing for $0<t<t^u$
and decreasing for $t>t^u$.

In particular, for any $u\neq 0$, $t^u\in \R$ is the unique value
such that $u_{t^u}$ belongs to $M$, and $I(u_t)$ reaches a global
maximum for $t=t^u$. This finishes the proof.
\end{proof}

The first claim of Theorem \ref{teo1} is  proved here; it is
indeed a consequence of a suitable comparison argument.
\begin{lemma}
There holds $m>0.$
\end{lemma}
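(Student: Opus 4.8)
The plan is to start from Lemma~\ref{lemma11}, which gives $m=\inf_{u\ne 0}\max_{t>0}I(u_t)=\inf_{u\ne0}\max_{t>0}f_u(t)$, and to bound $\max_{t>0}f_u(t)$ below by a positive constant \emph{independent} of $u$. The device is a comparison with the constant--potential problem. Since $V(x)\ge V_0$ by (V1), from \eqref{f} we have, for every $t>0$,
$$f_u(t)\ \ge\ \frac{t^{N+2}}{2}\Big(V_0\|u\|_{2}^{2}+\intr u^2|\nabla u|^2\Big)-\frac{t^{N+p+1}}{p+1}\intr|u|^{p+1}=:h(t),$$
where I have also discarded the nonnegative term $\frac{t^N}{2}\intr|\nabla u|^2$. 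Writing $\alpha=\frac12\big(V_0\|u\|_2^2+\intr u^2|\nabla u|^2\big)$ and $\beta=\frac1{p+1}\intr|u|^{p+1}$, the function $h(t)=\alpha t^{N+2}-\beta t^{N+p+1}$ is elementary to maximize.

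A direct computation gives its unique positive critical point $t_*=\big(\tfrac{(N+2)\alpha}{(N+p+1)\beta}\big)^{1/(p-1)}$ and
$$\max_{t>0}f_u(t)\ \ge\ h(t_*)=c_{N,p}\,\frac{\alpha^{(N+p+1)/(p-1)}}{\beta^{(N+2)/(p-1)}},$$
with $c_{N,p}>0$ depending only on $N$ and $p$. It then remains to show that the right-hand side is bounded below uniformly in $u\ne0$, i.e.\ that $\beta$ cannot be too large compared with $\alpha$.

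This is exactly a quantitative form of the embedding $X\hookrightarrow L^{p+1}$. I would interpolate $\|u\|_{p+1}$ between $L^2$ and $L^{4N/(N-2)}$ and use the homogeneous Sobolev inequality applied to $u^2\in H^1(\R^N)$, namely $\|u\|_{4N/(N-2)}^2=\|u^2\|_{2N/(N-2)}\le C_S\|\nabla u^2\|_2=2C_S\big(\intr u^2|\nabla u|^2\big)^{1/2}$. After collecting the exponents this yields
$$\intr|u|^{p+1}\ \le\ C\Big(V_0\|u\|_2^2+\intr u^2|\nabla u|^2\Big)^{\frac{N+p+1}{N+2}},\qquad\text{that is}\qquad \beta\le C'\,\alpha^{\frac{N+p+1}{N+2}}.$$
Substituting this into the previous bound, the powers of $\alpha$ cancel precisely, and $\max_{t>0}f_u(t)\ge c_{N,p}(C')^{-(N+2)/(p-1)}=:\delta>0$ for every $u\ne0$; hence $m\ge\delta>0$.

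The heart of the matter — and the step I expect to require care — is the exponent $\tfrac{N+p+1}{N+2}$ in the interpolation inequality, since it is only this precise value that makes the bound scale invariant and produces a lower bound independent of $u$. The interpolation parameter works out to $\theta=\frac{2N(p-1)}{(N+2)(p+1)}$, and one checks that $\theta\in(0,1)$ if and only if $p\in(1,\frac{3N+2}{N-2})$; this is exactly where the upper restriction on $p$ enters, and where the admissibility of the interpolation must be verified. Note that (V3) plays no direct role in this argument: it has already been used, through Lemma~\ref{lemma11}, to secure the identity $m=\inf_{u\ne0}\max_{t>0}I(u_t)$ that opens the proof.
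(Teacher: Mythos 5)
Your argument is correct, and it reaches the conclusion by a genuinely different route from the paper's. The paper also exploits $V\ge V_0$ from (V1), but it does so by introducing the constant-potential functional $\bar{I}$ and its Pohozaev manifold $\bar{M}=\{u\ne0: g_u'(1)=0\}$, so that $\bar m=\inf_{\bar M}\bar I\le m$; on $\bar M$ the constraint, a pointwise Young inequality $|u|^{p+1}\le \e u^2+C_\e|u|^{4N/(N-2)}$ and the Sobolev inequality applied to $u^2$ show that $\intr u^2|\nabla u|^2$ is bounded away from zero, and then $\bar I|_{\bar M}$, rewritten via the constraint as a positive combination of $\intr|\nabla u|^2$, $\intr u^2$ and $\intr u^2|\nabla u|^2$, is bounded below. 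You never pass to an auxiliary manifold: you minorize $f_u$ by the two-term function $\alpha t^{N+2}-\beta t^{N+p+1}$, maximize it explicitly, and invoke the scale-invariant Gagliardo--Nirenberg-type bound $\intr|u|^{p+1}\le C\bigl(V_0\|u\|_2^2+\intr u^2|\nabla u|^2\bigr)^{(N+p+1)/(N+2)}$, whose exponent is exactly tuned to make the powers of $\alpha$ cancel. Your bookkeeping is right: $\theta=\frac{2N(p-1)}{(N+2)(p+1)}$, the total exponent $\frac{(1-\theta)(p+1)}{2}+\frac{\theta(p+1)}{4}=\frac{N+p+1}{N+2}$, and $\theta\in(0,1)$ precisely when $p\in(1,\frac{3N+2}{N-2})$, which is where subcriticality enters in both proofs. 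The two arguments rest on the same two ingredients (Sobolev for $u^2$ and $p+1<\frac{4N}{N-2}$); yours has the advantage of producing an explicit lower bound for $m$ in terms of $N$, $p$, $V_0$ and the Sobolev constant, while the paper's version additionally establishes along the way that $\intr u^2|\nabla u|^2$ is bounded away from zero on the constraint set, the fact behind the remark $d_X(M,0)>0$.
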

This readily implies that $d_X(M,0)>0$.
\begin{proof}
Let us define $$\bar{I}(u)=\frac{1}{2}\int_{\R^{N}}\left( |\nabla
u|^2+V_0\, u^2+u^2 |\nabla u|^2\right)- \frac{1}{p+1}\int_{\R^{N}}
|u|^{p+1},$$ where $V_0$ comes from $(V1)$. Obviously, $\bar{I}(u)
\leq I(u)$, and this implies that:
$$\bar{m}:= \inf_{u\in X, u\ne0} \max_{t>0} \bar{I}(u_t) \leq \inf_{u\in X, u\ne0} \max_{t>0} I(u_t)=m. $$
It suffices then to prove that $\bar{m}>0$. Let us define:
$$\bar{M}=\left\{u\in X\setminus\{0\}: g_u'(1)=0\right\} \ \ \text{where} \ \ g_u (t)=\bar{I}(u_t).$$
By Lemma \ref{lemma11} applied to $V \equiv V_0$ (actually a more
direct proof can be given in this case), we know that
$$ \bar{m} = \inf_{u \in \bar{M}} \bar{I}(u).$$
For any $u\in \bar{M}$,
\begin{align*}
\frac{N+2}{2}V_0 \int_{\R^{N}} u^2+\frac{N+2}{2}\int_{\R^{N}} |\nabla u|^2 u^2&\le\frac{N+p+1}{p+1}\int_{\R^{N}} |u|^{p+1}\\
&\le \frac{N+2}{2} V_0 \int_{\R^{N}} u^2+ C \int_{\R^{N}}
|u|^{\frac{4N}{N-2}}
\end{align*}
for a suitable constant $C>0$. So, by using the Sobolev
inequality,
$$\frac{N+2}{2}\int_{\R^{N}}|\nabla u|^{2}u^{2}dx\le C\int_{\R^{N}} |u|^{4N/N-2}\le C' \left(\int_{\R^{N}} |\nabla u|^2 u^2\right)^{\frac{N}{N-2}}$$
and this shows that $\int_{\R^{N}} |\nabla u|^2 u^2$ is bounded
away from zero on $\bar{M}$.

We conclude since the functional $\bar{I}$ restricted to $\bar{M}$
has the expression 
$$
\bar{I}(u) =\frac{1}{2}\frac{p+1}{N+p+1}\int_{\R^{N}} |\nabla
u|^2+\frac{V_0}{2}\frac{p-1}{N+p+1}\int_{\R^{N}} u^2
+\frac{p-1}{N+p+1}\int_{\R^{N}}|\nabla u|^2 u^2\,.
$$

\end{proof}

In a certain sense, the next proposition deals with the
``coercivity" of $I|_{M}$.

\begin{prop}\label{prop1} There exists $c>0$ such that for any $u\in
M$,
$$I(u) \geq c \int_{\R^N} \Big( u^2 + |\nabla u|^2 + u^2 |\nabla
u|^2\Big).$$
\end{prop}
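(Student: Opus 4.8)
The plan is to use the single constraint $J(u)=0$ defining $M$ to eliminate the nonlinear term $\intr|u|^{p+1}$ from $I(u)$, and then to exploit hypothesis (V3) to control the resulting potential contribution. For $u\in M$ I would abbreviate $A=\intr|\nabla u|^2$, $B=\intr V(x)u^2$, $C=\intr u^2|\nabla u|^2$, $D=\intr|u|^{p+1}$ and $E=\intr (x\cdot\nabla V(x))\,u^2$. Since $J(u)=0$ reads $\frac N2 A+\frac{N+2}{2}(B+C)+\frac12 E-\frac{N+p+1}{p+1}D=0$, solving for $D$ and inserting into $I(u)=\frac12(A+B+C)-\frac{1}{p+1}D$ gives the exact identity
$$2(N+p+1)\,I(u)=(p+1)A+(p-1)(B+C)-E.$$
Here $A$ and $C$ already carry positive coefficients, so the whole difficulty is concentrated in the combination $(p-1)B-E=\intr\big[(p-1)V(x)-x\cdot\nabla V(x)\big]u^2$.

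The next step is to extract from (V3) a pointwise inequality for $V$. Writing $h(s)=s^{(N+2)/(N+p+1)}V(s^{1/(N+p+1)}x)$, the hypothesis says $h$ is concave on $(0,\infty)$, and plainly $h(0)=0$. The tangent-line estimate at $s=1$, namely $h(0)\le h(1)-h'(1)$, unwinds (after computing $h(1)=V(x)$ and $h'(1)=\frac{N+2}{N+p+1}V(x)+\frac{1}{N+p+1}\,x\cdot\nabla V(x)$) exactly to $x\cdot\nabla V(x)\le (p-1)V(x)$ for every $x$. Consequently $(p-1)B-E=\intr[(p-1)V-x\cdot\nabla V]u^2\ge 0$, and the displayed identity immediately yields $I(u)\ge c_1(A+C)$ with $c_1=\frac{p-1}{2(N+p+1)}$; that is, the terms $\intr|\nabla u|^2$ and $\intr u^2|\nabla u|^2$ are under control.

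The main obstacle is recovering the missing term $\intr u^2$, which on $\R^N$ cannot be dominated by $A$ and $C$. For this I would upgrade the previous inequality to a \emph{uniform} gap $(p-1)V(x)-x\cdot\nabla V(x)\ge \d>0$. Strictness at each finite $x$ follows because equality in the tangent-line estimate forces $h$ to be affine on $[0,1]$, i.e. $V(rx)=V(x)\,r^{p-1}$ for $r\in[0,1]$, which as $r\to 0^+$ contradicts $V(0)\ge V_0>0$ (and the value $x=0$ is excluded directly). The uniform positivity as $|x|\to\infty$ is where (V1) and (V2) enter: there $V\to V_\infty>0$ while $x\cdot\nabla V$ stays bounded, and a Tauberian-type argument based on the monotonicity of $h'$ imposed by concavity shows $(p-1)V-x\cdot\nabla V\to(p-1)V_\infty$.

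Granting the gap $\d$, one has $(p-1)B-E\ge \d\intr u^2$, so the identity becomes
$$2(N+p+1)\,I(u)\ge (p+1)A+(p-1)C+\d\intr u^2,$$
which is the asserted coercivity with $c=\frac{1}{2(N+p+1)}\min\{p-1,\d\}$. I expect the establishment of this uniform gap to be the delicate point of the argument: it is precisely where the combined force of (V1), (V2) and (V3) is required, the pointwise bound $x\cdot\nabla V\le(p-1)V$ alone being insufficient to retain the $\intr u^2$ contribution.
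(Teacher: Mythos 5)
Your reduction is correct as far as it goes: the identity
$2(N+p+1)I(u)=(p+1)\intr|\nabla u|^2+(p-1)\intr\big(V(x)u^2+u^2|\nabla u|^2\big)-\intr x\cdot\nabla V(x)\,u^2$
on $M$ is exact, and the tangent-line estimate $h(0^+)\le h(1)-h'(1)$ for the concave function $h$ of (V3) does yield the pointwise bound $x\cdot\nabla V(x)\le(p-1)V(x)$, hence control of the two gradient terms. This is a genuinely different route from the paper's, which never manipulates the constraint algebraically: there one computes $I(u_t)-t^{N+p+1}I(u)$ for a fixed small $t\in(0,1)$, observes that the factor $t^{N+p+1}$ cancels the nonlinear term exactly, bounds $V(tx)\ge V_0\ge\delta V(x)$ via (V1), and uses $I(u_t)\le I(u)$ (which holds for $u\in M$ by Lemma \ref{lemma11}, where (V3) enters). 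That comparison delivers all three terms, including $\intr u^2$, in one stroke.

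The genuine gap in your proposal is exactly the step you flag as delicate: the uniform bound $(p-1)V(x)-x\cdot\nabla V(x)\ge\delta>0$. Your argument at infinity --- that $(p-1)V-x\cdot\nabla V\to(p-1)V_\infty$ --- presupposes $x\cdot\nabla V(x)\to0$ as $|x|\to\infty$, which does not follow from (V1)--(V2): hypothesis (V2) asserts only boundedness, and the authors explicitly allow bounded, non-decaying oscillations of $x\cdot\nabla V$ at infinity. Pointwise strict positivity of a continuous function on the noncompact set $\R^N$ gives no positive infimum by itself, so as written the $\intr u^2$ term is not recovered. The step can, however, be repaired inside your framework: instead of comparing the tangent at $s=1$ with the value at $0$, use the secant inequality $h'(1)\le\frac{h(1)-h(s_0)}{1-s_0}$ for a fixed $s_0\in(0,1)$, which gives
\begin{equation*}
h(1)-h'(1)\ \ge\ \frac{h(s_0)-s_0\,h(1)}{1-s_0}\ \ge\ \frac{s_0^{\frac{N+2}{N+p+1}}V_0-s_0V_\infty}{1-s_0},
\end{equation*}
a positive constant independent of $x$ as soon as $s_0^{\frac{p-1}{N+p+1}}<V_0/V_\infty$. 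This repaired version is, in pointwise disguise, the same ``compare with a fixed small scaling parameter'' device that the paper applies directly at the level of the functional, and the paper's version is shorter because it never needs the pointwise differential inequality at all.
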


\begin{proof}

Take $u \in M$ and choose $t\in(0,1)$. We compute:
\begin{align*}
I(u_t)-t^{N+p+1}I(u)&=\Big( \frac{t^{N}}{2} - \frac{t^{N+p+1}}{2}
\Big) \int_{\R ^{N}} |\nabla u|^2+
 \Big( \frac{t^{N+2}}{2} - \frac{t^{N+p+1}}{2} \Big) \int_{\R^{N}} |\nabla u|^2 u^2 \\
 &+ \int_{\R^{N}} \Big( \frac{t^{N+2}}{2}V(tx) - \frac{t^{N+p+1}}{2} V(x) \Big) u^2.
\end{align*}
Observe that $V(tx) \geq V_0 \geq \delta V_{\infty} \geq \delta
V(x)$, for some positive constant $\delta \in(0,1)$ depending only
on $V_0$ and $V_{\infty}$. By choosing a smaller $t$, if
necessary, we get that
$$ \frac{t^{N+2}}{2}V(tx) - \frac{t^{N+p+1}}{2} V(x) \geq
\Big(\delta \frac{t^{N+2}}{2 } - \frac{t^{N+p+1}}{2}\Big)  V(x)
\geq \gamma
$$
for a positive fixed constant $\gamma>0$. Recall that, by Lemma
\ref{lemma11}, $I(u_t) \leq I(u)$; by taking a smaller $\gamma$,
if necessary,
$$(1-t^{N+p+1})I(u) \geq I(u_t)-t^{N+p+1}I(u) \geq \gamma \int_{\R^N} \Big( u^2 + |\nabla u|^2 + u^2 |\nabla
u|^2\Big).$$ We conclude by defining $c =
\frac{\gamma}{1-t^{N+p+1}}$.
\end{proof}

\subsection{Proof of Theorem \ref{teo1}}
Take a sequence $u_n \in M$ so that $I(u_n) \to m$. By Proposition
\ref{prop1}, both $u_n$ and $u_n^2$ are bounded in $H^1(\R^N)$.
Passing to a convenient subsequence, both $u_n$ and $u_n^2$
converge weakly in $H^1(\R^N)$ and also pointwise. Therefore,
\begin{equation*}
u_{n}\wra u \ \ \text{ and } \ \ u_{n}^{2}\wra u^{2}\ \  \text{ in
} \ \  H^{1}(\R^{N}).
\end{equation*}
This implies in particular that $\{u_{n}\}$ is  bounded in
$L^{p+1}(\R^{N})$.

The proof proceeds in several steps.
\medskip

{\bf Step 1:} $\dis \intr |u_n|^{p+1} \nrightarrow 0$.

\medskip

Let us recall the expression of $I$:
$$ I(u_n)= \frac{1}{2}\int_{\R^{N}}\left( |\nabla u_n|^2+V(x)u_n^2+u_n^2 |\nabla u_n|^2\right)-\frac{1}{p+1}\int_{\R^{N}}
|u_n|^{p+1}\to m.
$$
First of all, since  $m >0$, $\|u_n\|_{H^1} + \|u_n^2\|_{H^1}
\nrightarrow 0$ by Proposition \ref{prop1}. By Lemma
\ref{lemma11}, for any $t>1$,
\begin{align*}
m \leftarrow &I(u_n)\geq I((u_n)_t) \\
&= \frac{t^{N}}{2}\int_{\R ^{N}} |\nabla u_n|^2+
\frac{t^{N+2}}{2}\int_{\R^{N}} V(tx) u_n^2 +
\frac{t^{N+2}}{2}\int_{\R^{N}} |\nabla u_n|^2
u_{n}^2-\frac{t^{N+p+1}}{p+1}\int_{\R^{N}} |u_n|^{p+1} \\
&\geq \frac{t^N}{2} \int_{\R ^{N}}\left( |\nabla u_n|^2+ V_0 u_n^2
+
  |\nabla u_n|^2 u_{n}^2 \right) - \frac{t^{N+p+1}}{p+1}\int_{\R^{N}} |u_n|^{p+1}\\
 &\geq \frac{t^N}{2} \delta  - \frac{t^{N+p+1}}{p+1}\int_{\R^{N}} |u_n|^{p+1},
\end{align*}
where $\delta$ is a fixed constant. It suffices to take $t>1$ so
that $\frac{t^N \delta}{2} >2 m$ to get a lower bound for $\intr
|u_n|^{p+1}$, proving Step 1.

So, let us assume (passing to a subsequence, if necessary), that
\begin{equation}\label{convA}
    \int_{\R^{N}} |u_n|^{p+1}\rightarrow A\in(0,\infty).
\end{equation}
\medskip

{\bf Step 2:} Splitting by concentration-compactness.

\medskip

In this step we use in an essential way the
concentration-compactness principle. 
We recall here the following result due to P. L. Lions (Lemma I.1
of \cite{L1}, part 2):
%
%
%
%
%



\begin{lemma} Let $1< r \leq \infty$, $1\leq q < \infty$ with $q \neq \frac{Nr}{N-r}$ if $r<N$.
Assume that $\psi_n$ is bounded in $L^q(\R^N)$,  $\nabla \psi_n$
is bounded in $L^r(\R^N)$ and:
$$ \sup_{y\in \R^N} \int_{B_y(R)}|\psi_n|^q \to 0\ \  \mbox{ for some }\ \  R>0. $$
Then $\psi_n \to 0$ in $L^{\alpha}(\R^N)$ for any $\alpha \in (q,
\frac{Nr}{N-r})$.
\end{lemma}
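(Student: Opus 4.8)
The plan is to prove this \emph{vanishing} lemma by a covering argument combined with a local Gagliardo--Nirenberg--Sobolev inequality. Write $r^{*}=\frac{Nr}{N-r}$ when $r<N$ and $r^{*}=+\infty$ otherwise, and fix $\alpha\in(q,r^{*})$. The heart of the matter is a local estimate on a ball $B=B_y(R)$ of the form
$$\int_B|\psi|^{\alpha}\le C\Big(\int_B|\psi|^{q}\Big)^{\kappa}\Big(\int_B|\nabla\psi|^{r}+|\psi|^{r}\Big),$$
with $\kappa>0$ and $C=C(N,r,q,R)$ \emph{independent of the center} $y$ (by translation invariance). Summing such an estimate over a cover of $\R^N$ by balls of radius $R$ with bounded overlap will let us factor out a positive power of the uniformly vanishing local masses, while the remaining energy stays bounded.

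To obtain the local estimate I would first interpolate $L^{\alpha}(B)$ between $L^{q}(B)$ and $L^{r^{*}}(B)$, namely $\|\psi\|_{L^{\alpha}(B)}\le\|\psi\|_{L^{q}(B)}^{1-\lambda}\|\psi\|_{L^{r^{*}}(B)}^{\lambda}$ with $\lambda\in(0,1)$ fixed by $\frac{1}{\alpha}=\frac{1-\lambda}{q}+\frac{\lambda}{r^{*}}$, and then control $\|\psi\|_{L^{r^{*}}(B)}$ by the Sobolev embedding $W^{1,r}(B)\hookrightarrow L^{r^{*}}(B)$ (for $r\ge N$ one uses $W^{1,r}(B)\hookrightarrow L^{s}(B)$ for large finite $s$, and for $r=\infty$ the Morrey embedding into $C^{0}(B)$). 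Raising to the power $\alpha$ produces a bound of the displayed type; choosing the single convenient exponent $\alpha_0$ for which the power of the $W^{1,r}(B)$ energy is exactly $1$ makes the subsequent summation linear, and the exponent $\kappa$ of the $L^{q}(B)$ factor is then automatically positive. The additive term $\int_B|\psi|^{r}$ is harmless: if $r\le q$ it is reabsorbed into the $L^{q}$ factor by H\"older on the bounded ball, while if $q<r<r^{*}$ the global Gagliardo--Nirenberg inequality shows $\psi_n$ is bounded in $L^{r}(\R^N)$.

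Finally I would fix a cover $\{B_{y_i}(R)\}_i$ of $\R^N$ with multiplicity at most some $M=M(N)$, sum the local estimate for $\alpha_0$, and bound each local mass by the supremum in the hypothesis:
$$\intr|\psi_n|^{\alpha_0}\le CM\Big(\sup_{y\in\R^N}\int_{B_y(R)}|\psi_n|^{q}\Big)^{\kappa}\Big(\intr|\nabla\psi_n|^{r}+|\psi_n|^{r}\Big).$$
The last factor is bounded uniformly in $n$ and the supremum tends to $0$, so $\intr|\psi_n|^{\alpha_0}\to0$. Every remaining $\alpha\in(q,r^{*})$ then follows by interpolation, between $L^{q}$ and $L^{\alpha_0}$ when $\alpha<\alpha_0$ and between $L^{\alpha_0}$ and $L^{r^{*}}$ when $\alpha>\alpha_0$, using that $\{\psi_n\}$ is bounded in $L^{q}(\R^N)$ and, by Sobolev, in $L^{r^{*}}(\R^N)$. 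The main obstacle is purely the exponent bookkeeping in the local inequality: one must check that the intermediate exponent $\alpha_0$ lies in $(q,r^{*})$, that it carries a strictly positive power $\kappa$ of the local $L^{q}$-mass, and that the degenerate endpoints $r\ge N$ and $r=\infty$ are handled by the appropriate Sobolev and Morrey embeddings.
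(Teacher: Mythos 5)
The paper does not prove this lemma at all: it is quoted verbatim as Lemma I.1 of Lions' concentration--compactness paper \cite{L1}, part 2, so there is no internal proof to compare against. Your argument is, in substance, exactly Lions' original proof (also the one reproduced in Willem's \emph{Minimax Theorems} for the $H^1$ case): interpolate $L^{\alpha}(B)$ between $L^{q}(B)$ and the local Sobolev space, pick the distinguished exponent $\alpha_0$ for which the $W^{1,r}(B)$-energy enters with total power exactly $r$ (one computes $\alpha_0=r\frac{N+q}{N}$, and the inequalities $q<\alpha_0<r^{*}$ are both equivalent to $q<r^{*}$, i.e.\ to the lemma being non-vacuous), sum over a bounded-overlap cover, and interpolate to reach the other exponents. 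This is correct. The one step I would ask you to make explicit is the summation of the zeroth-order term $\int_B|\psi|^{r}$ over the cover. When $q<r<r^{*}$ your appeal to the global Gagliardo--Nirenberg inequality does give a uniform bound on $\|\psi_n\|_{L^{r}(\R^N)}$ and the sum is fine. But when $r<q$, ``reabsorbing by H\"older'' replaces that term by $C\bigl(\int_B|\psi|^{q}\bigr)^{r/q}$ with $r/q<1$, and a sum of the form $\sum_i a_i^{\,s}$ with $s<1$ is \emph{not} controlled by $\bigl(\sum_i a_i\bigr)^{s}$; the rescue is that this term appears multiplied by the factor $\bigl(\int_B|\psi|^{q}\bigr)^{\kappa}$ with $\kappa=(\alpha_0-r)/q$, so the total exponent is $\kappa+r/q=\alpha_0/q>1$, and then $\sum_i a_i^{\alpha_0/q}\le(\sup_i a_i)^{(\alpha_0-q)/q}\sum_i a_i$ closes the argument with a strictly positive power of the vanishing local mass surviving. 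With that detail (and the routine care at the endpoints $r\ge N$, $r=\infty$ that you already flag), your proof is complete and is the standard one.
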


We apply the previous lemma to $\psi_n=u_n^2$, $q=\frac{p+1}{2}$
and $r=2$; by Step 1, $\psi_n$ does not converge to $0$ in
$L^{q}(\R^N)$. By interpolation, this implies that $\psi_n$ does
not converge to $0$ in $L^{s}(\R^N)$ for any $s \in (1,
\frac{2N}{N-2})$.

Then, there exists $\delta>0$ and $\{x_{n}\}\subset \R^N$ such
that
$$\int_{B_{x_n}(1)}|u_n|^{p+1} >  \delta>0.$$

Fix $\e >0 $ and take $R> \max\{ 1, \ \varepsilon^{-1}\}$,
$\eta_{R}(t)$ a smooth function defined on $[0,\, +\infty)$ such
that
\begin{itemize}
\item[a)] $\eta_{R}(t)=1$ for $0\le t\le R$, \item[b)]
$\eta_{R}(t)=0$ for $t\ge 2R$, \item[c)] $\eta'_{R}(t)\le 2/R$.
\end{itemize}
Define
\begin{equation*}
    v_n(x)=\eta_{R}(|x-x_n|)u_n (x) \ \ \ \mbox{and} \ \ \ w_n (x)=\left(1-\eta_{R}(|x-x_n|)\right)u_{n}(x).
\end{equation*}
Clearly $v_{n}$ and $w_{n}$ belong to $X$ and $u_{n}=v_{n}+w_{n}$.
Observe that in particular
\begin{equation}\label{nonvanish}
\liminf_{n\rightarrow+\infty} \int_{B_{x_{n}}(R)} |v_{n}|^{p+1}
\geq \delta.
\end{equation}

\bigskip

{\bf Step 3:} There exist constants $C>0$, $\tau>0$ independent of
$\e$ and $n_{0}=n_0(\e)$ such that $\|w_n\|_{H^1} + \|w_n^2
\|_{H^1} \leq C \e^{\tau}$ for all $n \geq n_0$.

\bigskip
%

In the rest of the proof we denote by $C$ certain positive
constants, that may change from one expression to another, but all
of them are independent of $\e$ and $n$.

\medskip

Define $z_n=u_n(\cdot+x_n)$. Clearly, $z_n \rightharpoonup z$ and
$z_n^2 \rightharpoonup z^2$ (both weak convergences are understood
in $H^1(\R^{N})$). By taking a larger $R$, if necessary, we can
assume that $\int_{A_0(R,\ 2R)} |z|^{p+1} < \e$, where $A_0(R,2R)$
denotes the anulus centered in $0$ with radii $R$ and $2R$. Then,
for $n$ large enough,
\begin{equation}\label{i}
\left | \int_{\R^{N}} |u_n|^{p+1}-\int_{\R^{N}}|v_n|^{p+1}
-\int_{\R^{N}} |w_n|^{p+1} \right | \le 3\varepsilon.
\end{equation}

\medskip Since $|\nabla z_{n}|^{2}$ is uniformly bounded in
$L^1(\R^N)$, up to a subsequence, $|\nabla z_{n}|^{2}$ converges
(in the sense of measure) to a certain positive measure $\mu$ with
$\mu(\R^{N})<+\infty$. By enlarging $R$, if necessary, we can
assume that $\mu \Big( A_0(R,2R) \Big)<\e$. Then, for $n$ large
enough,
$$\int_{\R^{N}}|\nabla u_{n}|^{2} \eta_{R}(|x-x_{n}|)(1-\eta_{R}(|x-x_{n}|))dx<
\e.$$
Taking this into account, straightforward computations show that
for $n$ large enough,
\begin{eqnarray}\label{ii}
\left | \int_{\R^{N}} |\nabla u_n|^2 -  \int_{\R^{N}} |\nabla
v_n|^2-\int_{\R^{N}}|\nabla w_n|^2 \right|& =& \left |
2\int_{\R^{N}}\nabla w_{n}\nabla v_{n} \right | \nonumber \\
&\leq& \frac{C}{R}+2\varepsilon \leq C\,\varepsilon.
\end{eqnarray}
Arguing as before, possibly choosing a larger $R$, we get also
\begin{equation}\label{iii}
\left|\int_{\R^{N}} |\nabla u_n|^2 u_n^2 - \int_{\R^{N}} |\nabla
v_n|^2 v_n^2 - \int_{\R^{N}} |\nabla w_n|^2 w_n^2 \right | \leq
C\,\varepsilon\,
\end{equation}
and finally
\begin{equation}\label{iv}
\left | \int_{\R^{N}} V(tx)u_n^2  - \int_{\R^{N}} V(tx)v_n^2 -
\int_{\R^{N}} V(tx)w_n^2 \right | \leq C \e.
\end{equation}
Putting together \eqref{i}, \eqref{ii}, \eqref{iii} and \eqref{iv}
we get that for $n$ sufficiently large and $t>0$,
\begin{equation}\label{500}
    |I((u_n)_t) - I((v_n)_t)-I((w_n)_t)| \le C\varepsilon
    (t^N+t^{N+p+1}).
\end{equation}

\bigskip

Now let us denote with $t^{v_{n}}$ and $t^{w_{n}}$ the posi\-tive
values which maximize $f_{v_{n}}(t)$ and $f_{w_{n}}(t)$
respectively, namely,
\begin{equation*}
I((v_{n})_{t^{v_{n}}})=\max_{t>0}I((v_{n})_{t}) \ \ \text{and} \ \
I((w_{n})_{t^{w_{n}}})=\max_{t>0}I((w_{n})_{t})\,.
\end{equation*}
First, let us assume that $t^{v_{n}} \leq  t^{w_{n}}$ (the other
case will be treated later). Then,
\begin{equation}\label{pos}
I((w_{n})_{t})\ge 0 \ \ \text{ for } \ \ t \leq t^{v_n}.
\end{equation}
The next aim is to find suitable bounds for the sequence
$\{t^{v_{n}}\}.$

\medskip

{\bf Claim:} There exist $0 < \tilde{t} <1<\bar{t}$ independent of
$\e$ such that $t^{v_n} \in (\tilde{t},\bar{t}\,)$.

\medskip
Indeed, take $ \bar{t}= \Big ( (p+1) (A+1)^{-1} B
\Big)^{\frac{1}{p-1}}$, where $A$ comes from \eqref{convA} and $B$
is large enough such that $\bar{t}>1$ and moreover
\begin{equation} \label{B} B \geq
\int_{\R^{N}}|\nabla u_{n}| ^{2}+\int_{\R^{N}}V_{\infty} u_{n}^{2}
+\int_{\R^{N}}|\nabla u_{n}|^{2} u_{n}^{2}.\end{equation}
 Then
 \begin{eqnarray*}
I((u_n)_{\bar{t}}) &\leq& \frac{\bar{t}^{N+2}}{2} \Big (
\int_{\R^{N}}|\nabla u_{n}| ^{2}+\int_{\R^{N}}V(\bar{t}x)
u_{n}^{2} +\int_{\R^{N}}|\nabla u_{n}|^{2} u_{n}^{2} -
\frac{2}{p+1} \bar{t\,}^{p-1} \intr |u_n|^{p+1} \Big)\\
&\leq& -B \frac{\bar{t}^{N+2}}{2}<0.
\end{eqnarray*}
By \eqref{500}
\begin{equation}\label{550}
I((u_{n})_{t})\ge I((v_{n})_{t})+I((w_{n})_{t})- C \varepsilon \ \
\forall \ t \in (0,\bar{t}\,].
\end{equation}
Then, taking a smaller $\varepsilon$ if necessary,
\begin{equation*}
I((v_{n})_{\bar{t}})+I((w_{n})_{\bar{t}})<0.
\end{equation*}
Then $I((v_{n})_{\bar{t}})<0$ or $ I((w_{n})_{\bar{t}})<0$. In any
case Lemma \ref{lemma11} implies that $t^{v_n}<\bar{t}$ (recall
that we are assuming $t^{v_n} \leq t^{w_n}$).

For the lower bound, take $\tilde{t}=\Big(\frac{m}{B}\Big)^{1/N}$,
where $B$ is chosen as in \eqref{B}. Let us point out that
$\tilde{t}<1$. For any $t\leq \tilde{t}$,
\begin{eqnarray*}
 I((u_n)_t) &\leq& \frac{\tilde{t}^{N}}{2}
\Big ( \int_{\R^{N}}|\nabla u_{n}| ^{2}+\int_{\R^{N}}V(\bar{t}x)
u_{n}^{2} +\int_{\R^{N}}|\nabla u_{n}|^{2} u_{n}^{2} \Big)\\
& \leq& \frac{m}{2B} \int_{\R^{N}}\left(|\nabla u_{n}|
^{2}+V_{\infty} u_{n}^{2} +|\nabla u_{n}|^{2} u_{n}^{2}\right) \le
\frac{m}{2}.
\end{eqnarray*}
By \eqref{pos} and \eqref{550},
\begin{equation}\label{greater}
I((u_n)_{t^{v_n}}) \geq I((v_{n})_{t^{v_n}})+I((w_{n})_{t^{v_n}})-
C \varepsilon \geq m - C \varepsilon
\end{equation}
and the right hand side can be made greater then
 $ m/2$, by choosing a small $\e$. We conclude that
$t^{v_n} > \tilde{t}$ and the claim is proved.

%



\bigskip

Since $u_n \in M$, $f_u$ reaches its maximum at $t=1$. Then,
$$m
\leftarrow I(u_n) \geq I((u_n)_{t^{v_n}}) 
$$
and using  \eqref{greater} we deduce, for $n$ large,
$I((w_{n})_t)\leq 2 C \e$ for all $ t \in (0, t^{v_n})$. Moreover,
for any $t \in (0,\tilde{t})$:
\begin{eqnarray*}
2 C \varepsilon &\ge& I((w_{n})_{t}) \\
&\ge& \frac{t^{N+2}}{2}\left[\int_{\R^{N}}|\nabla
w_{n}|^{2}+\int_{\R^{N}}V(tx)w_{n}^{2} +\int_{\R^{N}}|\nabla
w_{n}|^{2}w_{n}^{2}\right]-\frac{t^{N+p+1}}{p+1}\int_{\R^{N}}|w_{n}|^{p+1}\\
&\ge&  \frac{t^{N+2}}{2} q_n -D t^{N+p+1}
\end{eqnarray*}
where $$q_n= \int_{\R^{N}}|\nabla w_{n}|^{2}+ V_0 \int_{\R^{N}}
w_{n}^{2} +\int_{\R^{N}}|\nabla w_{n}|^{2}w_{n}^{2}$$ is bounded
(independently of $\e$) and $D>A$. Observe that $
\frac{t^{N+2}}{2} q_n -D t^{N+p+1} = \frac{t^{N+2}}4 q_n$ for $t =
\Big(\frac{q_n}{4D}\Big)^{\frac{1}{p-1}}$. By taking a larger $D$
we can assume that $\Big(\frac{q_n}{4D}\Big)^{\frac{1}{p-1}} \leq
\tilde{t}$. With this choice of $t$, we obtain:
$$ 2 C \varepsilon \geq I((w_{n})_{t}) \geq
\Big(\frac{q_n}{4D}\Big)^{\frac{N+2}{p-1}}\frac{q_{n}}{4D} \geq c
q_n^{\frac{N+p+1}{p-1}}.$$ In other words,
\begin{equation} \label{estimatew} \|w_n\|_{H^1} + \|w_n^2 \|_{H^1} \leq C
\e^{\frac{p-1}{2(N+p+1)}} \mbox{ for some }C>0 \mbox{ independent
of }\e.
\end{equation}

\bigskip

In the case  $t^{v_{n}} >  t^{w_{n}}$, we can argue analogously to
conclude that $\|v_n\|_{H^1} + \|v_n^2 \|_{H^1} \leq C
\e^{\frac{p-1}{2(N+p+1)}}$ for some $C>0$. But, choosing small
$\e$, this contradicts \eqref{nonvanish}, so \eqref{estimatew}
holds. This finishes the proof of Step 3.

\bigskip

We are now in conditions to conclude the proof of Theorem
\ref{teo1}.

\bigskip
{\bf Step 4:} The infimum of $I|_M$ is achieved.
\bigskip

 Recall, see Step 2, that $z_n =u_{n}(\cdot+x_{n})$, $z_n \rightharpoonup
z$ and $z_n^2 \rightharpoonup z^2$ (both weak convergences are in
$H^1(\R^{N})$). Moreover, by compactness, we have that $z_n\to z$
in
$L^2_{loc}(\R^N)$. 
 Finally, observe that $z\neq
0$ since, by \eqref{nonvanish},
$$ \delta < \liminf_{n\rightarrow+\infty} \intr |v_n|^{p+1} \leq  \lim_{n\rightarrow+\infty}
\int_{B_{0}(2R) }|z_n|^{p+1} = \int_{B_{0}(2R)} |z|^{p+1}.$$

Recall also that $u_n=v_n+w_n$, with $\|w_n\|_{H^1} +
\|w_n^2\|_{H^1} \leq C \e^{\frac{p-1}{2(N+p+1)}}$. In the
following estimate we use H{\"o}lder inequality to get:
\begin{eqnarray} \label{casi1}
 \dis \int_{\R^{N}} \left| u_n^2  - v_n^2 \right | &\leq & \intr |w_n| (|u_n| + |v_n|) \\
 &\leq&
 \dis \Big( \intr w_n^2 \Big)^{1/2} \Big( \intr(|u_n| + |v_n|)^{2} \Big)^{1/2} \leq C \e^{\frac{p-1}{2(N+p+1)}}\nonumber\,.
\end{eqnarray}

On the other hand,
$$ \int_{\R^{N}} v_n^2 \leq
\int_{B(0,2R)} z_n^2 \to \int_{B(0,2R)} z^2 \leq \intr z^2.$$

Combining this estimate with \eqref{casi1}, we obtain that
$$ \liminf_{n\rightarrow+\infty} \intr z_n^2 =
\liminf_{n\rightarrow+\infty} \intr u_n^2 \leq \intr z^2 + C
\e^{\frac{p-1}{2(N+p+1)}}. $$ Since $\e$ is arbitrary, we get that
$z_n \to z$ in $L^2(\R^N)$ and, by interpolation, $z_n \to z$ in
$L^q(\R^N)$ for all $q \in [2, \frac{4N}{N-2})$.

We discuss two cases:

\bigskip

{\bf Case 1:} $\{x_n\}$ is bounded. Assume, passing to a
subsequence, that $x_n \to x_0$. In this case $u_n \rightharpoonup
u$, $u_n^2 \rightharpoonup u^2$ (both weak convergences are in
$H^1$), $u_n \to u$ strongly in $L^q(\R^N)$ for any $q \in
[2,\frac{4N}{N-2})$, where $u= z(\cdot - x_0)$.

In the following we just need to recall the expression of
$I((u_n)_t)$, see \eqref{f}:
$$ m = \lim_{n\rightarrow+\infty} I(u_n) \geq \liminf_{n\rightarrow+\infty} I((u_n)_t) \geq I(u_t) \ \
\forall \ t>0.$$ So, $\max_t I(u_t) = m$ and $u_n \to u$, $u_n^2
\to u^2$ (both convergences are now strong in $H^1(\R^{N})$). In
particular, $u\in M$ is a minimizer of $I|_M$.

\bigskip

{\bf Case 2: } $\{x_n\}$ is unbounded. In this case, by Lebesgue
Convergence Theorem and condition (V1):
\begin{eqnarray*}
 \lim_{n\rightarrow+\infty} \intr V(tx)u_n^2(x) \, dx &=& \lim_{n\rightarrow+\infty} \intr V(t(x+x_n)) z_n^2(x) \, dx \\
 &=&V_{\infty} \intr  z^2
\geq \intr V(tx)z^2(x) \,dx= \lim_{n\rightarrow+\infty} \intr
V(tx)z_n^2(x) \,dx
\end{eqnarray*}
for any $t>0$ fixed. Therefore,
$$ m= \lim_{n\rightarrow+\infty} I(u_n) \geq \liminf_{n\rightarrow+\infty} I((u_n)_t) \geq \liminf_{n\rightarrow+\infty}
I((z_n)_t) \geq I(z_t) \ \ \ \forall \, t>0.$$ So, taking $t^{z}$
so that $f_z(t)=I(z_t)$ reaches its maximum, we get that
$z_{t^{z}}\in M$ and is a minimizer for $I|_M$.

\bigskip

Having a minimum of $I|_{M}$, the fact that it is indeed a
solution of our equation, is based on a general idea used in \cite{LWW2}.

\medskip

\subsection{Proof  of Theorem \ref{teo2}}
Let $u\in M$ be a minimizer of the functional $I|_M$. Recall that,
by Lemma \ref{lemma11}, $I(u)=\inf_{v\in X, \, v\ne0}\max_{t>
0}I(v_{t})=m$.

We argue  by contradiction by assuming that $u$ is not a weak
solution of \eqref{Eq}. In such a case, we can choose $\phi\in
C^{\infty}_{0}(\R^{N})$ such that
\begin{equation*}
\langle I'(u),\phi \rangle=  \int_{\R^{N}}\nabla u
\nabla\phi+\int_{\R^{N}}V(x)u\phi+\int_{\R^{N}}\nabla(u^{2})\nabla(u
\phi) -\int_{\R^{N}}|u|^{p-1}u\phi <-1.
\end{equation*}
Then we fix $\varepsilon>0$  sufficiently small such that
\begin{equation*}
\langle I'(u_{t}+\sigma \phi),\phi\rangle\le -\frac{1}{2},\ \ \
\forall\, |t-1|, \,|\sigma|\le\varepsilon
\end{equation*}
and  introduce a cut-off function $0\le\eta\le1$ such that $\eta
(t)=1$ for $|t-1|\le\varepsilon/2$ and $\eta(t)=0$ for
$|t-1|\ge\varepsilon.$

We perturb the  original curve $u_{t}$ by defining, for $t\ge0$
\begin{equation*}
\gamma (t)= \left\{ \begin{array}{ll}
u_{t} & \textrm{if } |t-1|\ge\varepsilon \\
u_{t}+\varepsilon\eta(t)\phi& \textrm{if } |t-1|<\varepsilon\,.\\
\end{array} \right.
\end{equation*}
Note that $\g(t)$ is a continuous curve in the metric space
$(X,d)$ and, eventually  choosing a smaller $\varepsilon$, we
obtain that $d_{X}(\g(t),0)>0$ for $|t-1|<\varepsilon$.


\medskip {\bf Claim:} $\sup_{t\ge0}I(\g(t))<m.$

\medskip

Indeed, if $|t-1|\ge\varepsilon,$ then $I(\g(t))=I(u_{t})<I(u)=m.$
If $|t-1|<\varepsilon$, by using the mean value theorem to the
$C^{1}$ map $[0,\varepsilon] \ni \sigma  \mapsto
I(u_{t}+\sigma\eta(t)\phi)\in \R$, we find, for a suitable
$\bar\sigma\in (0,\varepsilon)$,
\begin{eqnarray*}
I(u_{t}+\varepsilon\eta(t)\phi)&=&I(u_{t})+\langle I'(u_{t}+\bar\sigma\eta(t)\phi ), \eta(t)\phi\rangle \nonumber\\
&\le &I(u_{t})-\frac{1}{2} \eta(t) \\
&<&m
\end{eqnarray*}

\medskip

To conclude observe that $J(\gamma(1-\varepsilon))>0$ and
$J((\gamma(1+\varepsilon))<0 $. By the continuity of the map
$t\mapsto J(\g(t))$ there exists
$t_{0}\in(1-\varepsilon,1+\varepsilon)$ such that $J(\gamma(t_{0}))=0$. Namely, 
$\gamma(t_{0})=u_{t_{0}}+\varepsilon\eta(t_{0})\phi\in M$ and
$I(\gamma(t_0))<m$; this gives the desired contradiction.

\medskip So far we have proved that the minimizer of $I|_M$ is a
solution. Since any solution of \eqref{Eq} belongs to $M$ (see
Section 2), the minimizer is a ground state.

Moreover, consider $u \in M$ a minimizer for $I|_M$. Then, the
absolute value $|u|\in M$ is also a minimizer, and hence a
solution. By the classical maximum principle (recall that
solutions are $C^2$), $|u|>0$.


%

\bigskip

{\bf Acknowledgement:} The authors would like to thank Tommaso
Leonori and Marco Squassina for several discussions on the
subject.

\end{document}